\def\F{\mathbb{F}}
\def\Ann{\mathrm{Ann}}
\def\Orth{\mathrm{Orth}}
\newtheorem{theorem}{Theorem}[section]
\newtheorem{lemma}[theorem]{Lemma}
\newtheorem{proposition}[theorem]{Proposition}
\newtheorem{prop}[theorem]{Proposition}
\theoremstyle{definition}
\newcommand{\mb}{\mathbb}
\DeclareMathOperator{\Span}{span}
\DeclareMathOperator{\codim}{codim}
\providecommand{\abs}[1]{\lvert#1\rvert}
\begin{document}

\title[A note on the bilinear Bogolyubov theorem]{A note on the bilinear Bogolyubov theorem: transverse and bilinear sets}
\author[P.-Y. Bienvenu]{Pierre-Yves Bienvenu}
\address{Institut Camille-Jordan, Universit\'e Lyon 1, 43 boulevard du 11 novembre 1918
69622 Villeurbanne cedex, France}
\email{pbienvenu@math.univ-lyon1.fr}
\author[D. Gonz\'alez-S\'anchez]{Diego Gonz\'alez-S\'anchez}
\address{Instituto de Ciencias Matem\'aticas (CSIC-UAM-UC3M-UCM) -- Departamento de Matem\'aticas (Universidad Aut\'onoma de Madrid), 28049 Madrid, Spain} 
\email{diego.gonzalezs@uam.es}
\author[\'A. D. Mart\'inez]{\'Angel D. Mart\'inez}
\address{Institute for Advanced Study, Fuld Hall 412, 1 Einstein Drive, Princeton NJ 08540, USA} 
\email{amartinez@ias.edu}

\begin{abstract} 
A set $P\subset \F_p^n\times\F_p^n$
 is called \textit{bilinear} when it is the zero set of a family of linear  
and bilinear forms, and \textit{transverse}  
when it is stable under vertical and horizontal sums. A theorem of the  
first author provides a generalization of Bogolyubov's theorem to the  
bilinear setting. Roughly speaking, it implies that any dense  
transverse set $P\subset \F_p^n\times\F_p^n$ contains a large  
bilinear set. In this paper, we elucidate the extent to which  
a transverse set is forced to be (and not only contain) a bilinear set.
\end{abstract}
\date{}
\maketitle

%\tableofcontents
\section{Introduction}

\noindent 
A simple exercise shows that any nonempty subset $A\subset \mb{F}_p^n$ that is closed under addition is a linear subspace, that is, 
the zero set of a family of linear forms.
Indeed, denoting as usual
\[A\pm A=\{a\pm b: (a,b)\in A^2\},\]
this amounts to the claim that $A+A=A\neq \emptyset$ if and only if $A$ is a subspace (and analogously for 
$A-A$).
Considering a large amount of summands, one will eventually 
get $\Span(A)$, the linear subspace generated by $A$.  This may require an unbounded number of summands as the dimension $n$ or the prime $p$ tends to infinity.

The following classical result states that a bounded number of summands already suffices to produce a rather large subspace of $\Span(A)$.

\begin{theorem}[Bogolyubov]\label{thm:bogol} Let $A\subset \mb{F}_p^n$ be a subset of density $\alpha>0$, that is, 
$\abs{A}=\alpha p^n$. Then $2A-2A$
 contains a vector space of codimension $c(\alpha)=O(\alpha^{-2})$.
\end{theorem}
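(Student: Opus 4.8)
The plan is to run the classical Fourier-analytic argument. I work on $G=\mb{F}_p^n$ with the additive character $e_p(t)=e^{2\pi i t/p}$, the normalised Fourier transform $\wh{f}(\xi)=\mb{E}_{x\in G}f(x)e_p(-\xi\cdot x)$, Parseval's identity $\sum_\xi\abs{\wh{f}(\xi)}^2=\mb{E}_x\abs{f(x)}^2$, the convolution $f*g(x)=\mb{E}_y f(y)g(x-y)$ satisfying $\wh{f*g}=\wh{f}\,\wh{g}$, and Fourier inversion $f(x)=\sum_\xi\wh{f}(\xi)e_p(\xi\cdot x)$. The elementary but decisive observation is that a point $x$ lies in $2A-2A=A+A-A-A$ exactly when the representation count $r(x):=(\mathbf{1}_A*\mathbf{1}_A*\mathbf{1}_{-A}*\mathbf{1}_{-A})(x)$ is strictly positive, because $r$ is a nonnegative multiple of the number of quadruples $(a_1,a_2,a_3,a_4)\in A^4$ with $a_1+a_2-a_3-a_4=x$. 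Since $\wh{\mathbf{1}_{-A}}=\overline{\wh{\mathbf{1}_A}}$, Fourier inversion and the convolution identity give $r(x)=\sum_\xi\abs{\wh{\mathbf{1}_A}(\xi)}^4 e_p(\xi\cdot x)$.

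Next I would isolate the large Fourier coefficients. For a threshold $\rho\in(0,\alpha]$ to be fixed later, set $\Lambda=\{\xi\in G:\abs{\wh{\mathbf{1}_A}(\xi)}\ge\rho\}$; note $0\in\Lambda$ since $\wh{\mathbf{1}_A}(0)=\alpha\ge\rho$. Parseval gives $\alpha=\sum_\xi\abs{\wh{\mathbf{1}_A}(\xi)}^2\ge\abs{\Lambda}\rho^2$, so $\abs{\Lambda}\le\alpha\rho^{-2}$. Let $V=\Lambda^\perp=\{x\in G:\xi\cdot x=0\text{ for all }\xi\in\Lambda\}$, a subspace of $\mb{F}_p^n$ with $\codim V\le\abs{\Lambda}\le\alpha\rho^{-2}$. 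The claim is then that $V\subseteq 2A-2A$ for a suitable $\rho$.

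To verify the claim, fix $x\in V$ and split $r(x)=\sum_{\xi\in\Lambda}\abs{\wh{\mathbf{1}_A}(\xi)}^4 e_p(\xi\cdot x)+\sum_{\xi\notin\Lambda}\abs{\wh{\mathbf{1}_A}(\xi)}^4 e_p(\xi\cdot x)$. For $\xi\in\Lambda$ one has $\xi\cdot x=0$, hence $e_p(\xi\cdot x)=1$, so the first sum is a sum of nonnegative reals and is bounded below by its $\xi=0$ term $\alpha^4$. The second sum is bounded in modulus by $\sum_{\xi\notin\Lambda}\abs{\wh{\mathbf{1}_A}(\xi)}^4\le\bigl(\sup_{\xi\notin\Lambda}\abs{\wh{\mathbf{1}_A}(\xi)}^2\bigr)\sum_\xi\abs{\wh{\mathbf{1}_A}(\xi)}^2\le\rho^2\alpha$. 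Thus $r(x)\ge\alpha^4-\rho^2\alpha$, which is positive once $\rho^2<\alpha^3$. Taking $\rho=\tfrac12\alpha^{3/2}$ (which lies in $(0,\alpha]$ as $\alpha\le 1$) yields $r(x)\ge\tfrac34\alpha^4>0$, so $V\subseteq 2A-2A$, while $\codim V\le\alpha\rho^{-2}=4\alpha^{-2}$, giving the asserted bound $c(\alpha)=O(\alpha^{-2})$.

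There is no genuine obstacle here; the only point calling for a little thought is the choice of the spectral threshold $\rho$. A larger $\rho$ shrinks $\Lambda$ and so improves the codimension, but risks letting the error term $\rho^2\alpha$ overwhelm the main term $\alpha^4$; the balance $\rho\asymp\alpha^{3/2}$ is precisely what makes both constraints compatible and produces the quadratic dependence $O(\alpha^{-2})$.
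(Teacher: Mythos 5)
Your argument is correct and complete: it is the standard Fourier-analytic proof of Bogolyubov's theorem, with the representation function $r=\mathbf{1}_A*\mathbf{1}_A*\mathbf{1}_{-A}*\mathbf{1}_{-A}$, the large spectrum $\Lambda$ at threshold $\rho\asymp\alpha^{3/2}$, and the annihilator subspace $V=\Lambda^\perp$ of codimension at most $4\alpha^{-2}$, exactly as the constants require. The paper itself gives no proof of this statement (it is cited as classical, to Bogolyubov's original paper, with Sanders' improvement mentioned), and your write-up is precisely the textbook argument that the cited bound $c(\alpha)=O(\alpha^{-2})$ rests on, so nothing further is needed.
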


\noindent Bogolyubov's original paper \cite{B39} deals with 
$\mb{Z}/N\mb{Z}$, but the ideas translate to finite $\mb{F}_p$-vector spaces. 
Note that if $A$ is a vector space, its codimension is $\log_p\alpha^{-1}$. As a consequence, $c(\alpha)\geq \log_p\alpha^{-1}$. Sanders \cite{SandersBogo}
improved the bound in the statement to a nearly optimal $c(\alpha)=O(\log^{4}\alpha^{-1})$.
Recently, bilinear versions of this result by the first author and L\^e \cite{BL17} and, independently, Gowers and Mili\'cevi\'c \cite{GM17} have appeared. Let us now state this bilinear Bogolyubov theorem. We need to introduce a piece of useful notation (cf. \cite{BL17}). 

 Given a set $A\subset \mb{F}_p^n \times \mb{F}_p^n$ we define the vertical sum or difference as
\[
A \stackrel{V}{\pm} A:= \{ (x,y_1\pm y_2): (x,y_1),(x,y_2)\in A \}.
\]
The set $A \stackrel{H}{\pm} A$ is defined analogously but fixing the second coordinate. Then we define $\phi_V$ as the operation
\[
A \mapsto (A \stackrel{V}{+} A) \stackrel{V}{-} (A \stackrel{V}{+} A)
\]
and $\phi_H$ similarly.
The theorem proved in \cite{BL17} is the following.

\begin{theorem}[Bienvenu and L\^e, \cite{BL17}]\label{thm:bilinear-bogol} Let $\delta>0$, then there is $c(\delta)>0$ such that the following holds. For any $A \subset \mb{F}_p^n \times \mb{F}_p^n$ of density $\delta$, there exists $W_1,W_2 \subseteq \mb{F}_p^n$ subspaces of codimension $r_1$ and $r_2$ respectively and bilinear  forms $Q_1,\cdots,Q_{r_3}$ on $W_1\times W_2$ such that
$\phi_H\phi_V\phi_H(A)$ contains
\begin{equation}\label{eq:bilinar-bogol}
\{(x,y)\in W_1\times W_2: Q_1(x,y)=\cdots=Q_{r_3}(x,y)=0 \} 
\end{equation}
where $\max\{r_1,r_2,r_3\}\le c(\delta)$.
\end{theorem}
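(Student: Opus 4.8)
\medskip

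\noindent The plan is to bootstrap the classical Bogolyubov theorem (Theorem~\ref{thm:bogol}) by applying it fibre by fibre and then pushing the resulting information through the three operations. Throughout, $A^y=\{x:(x,y)\in A\}$ denotes the horizontal fibre, $e_p(t):=e^{2\pi i t/p}$, and $\widehat{g}(\xi):=\mathbb{E}_x g(x)e_p(-\xi\cdot x)$. The workhorse is the \emph{spectral} form of Bogolyubov's argument: for $B\subseteq\mb{F}_p^n$ of density $\beta>0$, putting $\rho=\sqrt{\beta/2}$ and $\mathrm{Spec}_\rho(B)=\{\xi:\abs{\widehat{1_B}(\xi)}\ge\rho\beta\}$, Parseval yields $\abs{\mathrm{Spec}_\rho(B)}\le 2\beta^{-2}$, while $2B-2B\supseteq\mathrm{Spec}_\rho(B)^{\perp}$ because for $z$ orthogonal to $\mathrm{Spec}_\rho(B)$ one has $(1_B*1_B*1_{-B}*1_{-B})(z)=\sum_\xi\abs{\widehat{1_B}(\xi)}^4 e_p(\xi\cdot z)\ge\beta^4-\rho^2\beta^3>0$. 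Thus $2B-2B$ contains a subspace of codimension $O(\beta^{-2})$ whose defining functionals are read off from the large spectrum of $B$; Sanders's refinement \cite{SandersBogo} could replace this bound, but it is not needed. I also record that $\phi_H$ and $\phi_V$ are monotone with respect to inclusion.

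\noindent First I would apply the innermost $\phi_H$. With $h(y):=\abs{A^y}/p^n$ we have $\mathbb{E}_y h(y)=\delta$, so $Y:=\{y:h(y)\ge\delta/2\}$ has density at least $\delta/2$. Applying the spectral lemma to each $A^y$ with $y\in Y$, and with the \emph{uniform} threshold $\rho=\sqrt{\delta/4}$ (legitimate since $\rho^2=\delta/4<\delta/2\le h(y)$), produces subspaces $V_y:=\mathrm{Spec}_\rho(A^y)^{\perp}\subseteq 2A^y-2A^y=\phi_H(A)^y$ of codimension at most $D:=8\delta^{-2}$, uniformly over $y\in Y$. Hence $\phi_H(A)\supseteq\mc{V}:=\{(x,y):y\in Y,\ x\in V_y\}$, a set each of whose horizontal fibres is a subspace of codimension $\le D$. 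The functionals defining $V_y$ are the large frequencies of the partial Fourier transform $y\mapsto\mathbb{E}_u 1_A(u,y)e_p(-\xi\cdot u)$, whose behaviour in $y$ is in turn governed by the full spectrum of $1_A$ on $\mb{F}_p^n\times\mb{F}_p^n$, a set of size $O(\delta^{-2})$; this is the seed of the bilinear forms.

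\noindent By monotonicity it now suffices to locate a bilinear set inside $\phi_H\phi_V(\mc{V})$. The bilinear forms should come from the constraint ``$x\in V_y$'' itself: after a regularisation step one expects to find a bounded-codimension subspace $W_1\subseteq\mb{F}_p^n$, a bounded-dimensional subspace $V_0\subseteq\mb{F}_p^n$ (namely $V_0=W_1^{\perp}$), linear maps $\ell_1,\dots,\ell_{r_3}\colon\mb{F}_p^n\to\mb{F}_p^n$, and a still-dense subset $Y'\subseteq Y$ such that $\mathrm{Spec}_\rho(A^y)\subseteq V_0+\Span\{\ell_1(y),\dots,\ell_{r_3}(y)\}$ for all $y\in Y'$; equivalently, $\mc{V}\cap(W_1\times\mb{F}_p^n)$ contains $\{(x,y):y\in Y',\ x\in W_1,\ \langle\ell_1(y),x\rangle=\cdots=\langle\ell_{r_3}(y),x\rangle=0\}$, which is a bilinear set \emph{except} that the second coordinate is confined to $Y'$ rather than to a subspace. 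The two remaining operations are then spent on upgrading this: a Bogolyubov step in the second variable (carried by $\phi_V$) should promote ``$y\in Y'$'' to ``$y\in W_2$'' for a bounded-codimension subspace $W_2$, and the outermost $\phi_H$ should perform the symmetric cleanup of the first coordinate while also absorbing the fact that the above linearisation of $\mathrm{Spec}_\rho(A^y)$ need only hold approximately. Once a genuine bilinear set $P=\{(x,y)\in W_1\times W_2:Q_1(x,y)=\cdots=Q_{r_3}(x,y)=0\}$ with $Q_k(x,y)=\langle\ell_k(y),x\rangle$ has been exhibited inside $\phi_H\phi_V(\mc{V})$, the proof is complete: every fibre of a bilinear set is cut out by forms linear in the free variable, hence is a subspace, so $P$ is a fixed point of $\phi_V$ and $\phi_H$; and $\max\{r_1,r_2,r_3\}=O_\delta(1)$, as demanded in \eqref{eq:bilinar-bogol}.

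\noindent The genuine difficulty, I expect, is exactly this \emph{regularisation of the family $\{\mathrm{Spec}_\rho(A^y)\}_{y\in Y}$} into boundedly many linear and bilinear relations, with every count depending on $\delta$ alone and not on $p$ or $n$. Crude fibrewise density bookkeeping is hopeless here: already $\mc{V}$ has density only $\gtrsim\delta\,p^{-D}$, so one cannot afford to pass through its indicator function, and the whole argument must be run at the level of Fourier coefficients, exploiting that the global spectrum of $1_A$ is small and that the fibrewise spectra cannot depend on $y$ in an arbitrary way. This is where the bilinear content of the theorem lives; everything surrounding it is an assembly of the classical Bogolyubov theorem applied horizontally and vertically, in the spirit of \cite{BL17}.
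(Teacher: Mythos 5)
This theorem is not proved in the paper at all: it is quoted from \cite{BL17} as an external input, so there is no internal argument to compare yours against. Judged on its own terms, your proposal is a strategy outline rather than a proof, and the gap sits exactly where you yourself place it. The step in which you ``expect to find'' a bounded-codimension subspace $W_1$, linear maps $\ell_1,\dots,\ell_{r_3}$ and a dense set $Y'$ with $\mathrm{Spec}_\rho(A^y)\subseteq W_1^{\perp}+\Span\{\ell_1(y),\dots,\ell_{r_3}(y)\}$ for all $y\in Y'$ is not a ``regularisation step'' one can wave at: it \emph{is} the bilinear Bogolyubov theorem. Nothing in the fibrewise application of the classical spectral argument gives the assignment $y\mapsto\mathrm{Spec}_\rho(A^y)$ any linear structure in $y$; extracting that structure is where \cite{BL17} (and Gowers--Mili\'cevi\'c) spend essentially all of their effort, via arguments of Balog--Szemer\'edi--Gowers type combined with Freiman--Ruzsa-type structure theorems in $\F_p^n$ to upgrade an approximately additive choice of frequencies to a genuinely linear one on a bounded-codimension subspace, with all bounds depending on $\delta$ alone. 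Your sketch names the difficulty but supplies no mechanism for it, so the central implication is missing.

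The surrounding steps are also asserted rather than proved. The claim that a single $\phi_V$ ``promotes $y\in Y'$ to $y\in W_2$'' is not a routine application of Theorem \ref{thm:bogol}, because the set you are summing vertically is not a Cartesian product: the constraint $\langle\ell_k(y),x\rangle=0$ varies with $y$, so vertical sums over $y_1+y_2-y_3-y_4$ mix different constraints and one must show that bilinearity of the forms is exactly what lets the constraints survive (this is where the specific order $\phi_H\phi_V\phi_H$ matters in \cite{BL17}); likewise, the statement that the outer $\phi_H$ ``absorbs'' the error in an only-approximate linearisation has no quantitative content as written. The first paragraph of your proposal (the spectral form of Bogolyubov, $\abs{\mathrm{Spec}_\rho(B)}\le 2\beta^{-2}$ and $2B-2B\supseteq\mathrm{Spec}_\rho(B)^{\perp}$) is correct and is indeed the right starting point, and your final observation that a bilinear set is transverse is consistent with the paper; but between these two ends the argument that constitutes the theorem is absent.
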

\noindent The poor bound obtained in \cite{BL17} and \cite{GM17} was improved
very recently by Hosseini and Lovett \cite{HL18} to the nearly optimal
$c(\delta)=O(\log^{O(1)}\delta^{-1})$, at the cost of replacing
$\phi_H\phi_V\phi_H$ by a slightly longer sequence of 
operations.

We call a set $A\subset\mathbb{F}^n_p\times\mathbb{F}^n_p$ \textit{transverse} if it satisfies $A\stackrel{V}{+} A=A\stackrel{H}{+} A=A$. In connection with the result 
above the following natural problem arose: characterise transverse sets. Examples of transverse sets
are what we call \textit{bilinear} sets, that is, zero sets of linear and bilinear forms as in \eqref{eq:bilinar-bogol}.
It is tantalizing to suspect that they are the only possible examples.
Theorem \ref{thm:bilinear-bogol} only shows that any transverse set $A$ of
density $\alpha$
\emph{contains} a bilinear subset defined by $c(\alpha)$ linear
and bilinear forms.

In this paper, we find transverse, non bilinear sets
$A\subset\mathbb{F}^n_p\times\mathbb{F}^n_p$ for any $(p,n)$
except $p=2$ and $n=2$ where it is possible to list all transverse sets and check that they are bilinear. In this direction, we provide an explicit counterexample for $p=3$ and $n=2$ and a non-constructive argument in general.

\begin{prop}
\label{algebraic}
Let $P\subset \F_3^2\times\F_3^2$ be the set of 
$((x_1,x_2),(y_1,y_2))$ satisfying
\begin{equation}\label{def:Pintro}
\left\{\begin{array}{l}
x_1y_1^2+x_2y_2^2=0\\
x_1^2y_1+x_2^2y_2=0
\end{array}\right.
\end{equation}
is transverse but not bilinear.
\end{prop}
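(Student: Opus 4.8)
The plan is to handle the two assertions separately: transversality by describing all slices of $P$ explicitly, and non‑bilinearity by eliminating, dimension by dimension, every space of bilinear forms whose zero set could equal $P$.

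For transversality I would first record that over $\F_3$ one has $t^2=\mathbf 1_{t\neq 0}$, so the system \eqref{def:Pintro} is equivalent to $\sum_{j:\,y_j\neq 0}x_j=0$ and $\sum_{j:\,x_j\neq 0}y_j=0$; in this shape it is visibly symmetric under $(x,y)\mapsto(y,x)$, so $P$ coincides with its transpose $\{(y,x):(x,y)\in P\}$. I would then compute, for each of the nine $x\in\F_3^2$, the vertical slice $P_x=\{y:(x,y)\in P\}$; a short case split (on whether $x=0$, is supported on a single coordinate, or has both coordinates equal or opposite) shows that each $P_x$ is a \emph{linear subspace}: $P_0=\F_3^2$; $P_x=\{y:y_i=0\}$ when $x$ is supported exactly on coordinate $i$; $P_x=\langle(1,-1)\rangle$ when $x\in\langle(1,-1)\rangle\setminus\{0\}$; and $P_x=\{0\}$ when $x\in\langle(1,1)\rangle\setminus\{0\}$. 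Since every slice is a (nonempty) subspace, hence equals its own sumset, $P\stackrel{V}{+}P=P$ follows immediately; applying the same to the transpose of $P$ gives $P\stackrel{H}{+}P=P$, so $P$ is transverse.

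For non‑bilinearity I would argue by contradiction, after a reduction. Because $\F_3^2\times\{0\}\subseteq P$, $\{0\}\times\F_3^2\subseteq P$ and $(0,0)\in P$, no nonzero affine‑linear form vanishes on $P$; hence if $P$ were bilinear it would be the common zero set of genuine bilinear forms, i.e.\ $P=Z(\mathcal C):=\{(x,y):x^{\top}Cy=0\text{ for all }C\in\mathcal C\}$ for some subspace $\mathcal C$ of $2\times2$ matrices over $\F_3$. Writing $R_x:=\{C:C^{\top}x=0\}$, which has dimension $2$ for $x\neq0$, and using $Z(\mathcal C)_x=(\mathcal C^{\top}x)^{\perp}$ (orthogonal complement for the standard pairing on $\F_3^2$, with $\mathcal C^{\top}x=\{C^{\top}x:C\in\mathcal C\}$), one gets, for $x\neq0$, the identity $\dim P_x=2-\dim\mathcal C+\dim(\mathcal C\cap R_x)$. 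Recall from the first part that $\dim P_x$ equals $1$ on the three lines $\langle(1,0)\rangle,\langle(0,1)\rangle,\langle(1,-1)\rangle$ and $0$ on $\langle(1,1)\rangle$. I would then run through $d:=\dim\mathcal C$. If $d\le1$, every slice of $Z(\mathcal C)$ has dimension $\ge1$, contradicting $\dim P_{(1,1)}=0$. If $d=4$, then $\mathcal C$ is all of $M_2(\F_3)$ and $Z(\mathcal C)_{(1,0)}=\{0\}$, contradicting $\dim P_{(1,0)}=1$. If $d=3$, the equalities $\dim P_{(1,0)}=\dim P_{(0,1)}=1$ force $R_{(1,0)}\subseteq\mathcal C$ and $R_{(0,1)}\subseteq\mathcal C$; but $R_{(1,0)}$ and $R_{(0,1)}$ are the spaces of matrices with vanishing first, respectively second, row, so $R_{(1,0)}+R_{(0,1)}=M_2(\F_3)$ has dimension $4>3$ --- impossible.

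The one case needing real work, and the one I expect to be the main obstacle, is $d=2$. Here $\dim P_x=1$ at $x\in\{(1,0),(0,1),(1,-1)\}$ forces each of $\mathcal C\cap R_{(1,0)}$, $\mathcal C\cap R_{(0,1)}$, $\mathcal C\cap R_{(1,-1)}$ to be a line. I would pick nonzero $M_1\in\mathcal C\cap R_{(1,0)}$ and $M_2\in\mathcal C\cap R_{(0,1)}$; they have zero first row and zero second row respectively, hence are linearly independent and span $\mathcal C$. A nonzero element $\alpha M_1+\beta M_2$ of $\mathcal C\cap R_{(1,-1)}$ has its two rows equal, which (as one row of $M_1$ and one row of $M_2$ vanish) forces the nonzero row of $M_1$ and the nonzero row of $M_2$ to be proportional, say both lying on a line $\ell\subseteq\F_3^2$; thus $\mathcal C$ is the $2$‑dimensional space of all matrices with both rows on $\ell$. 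But then $\mathcal C\cap R_{(1,1)}$ --- matrices with both rows on $\ell$ and with opposite rows --- is again a line, so $\dim Z(\mathcal C)_{(1,1)}=1\neq0=\dim P_{(1,1)}$. Every value of $d$ being excluded, $P$ is not bilinear. The slice computations and the counts in the cases $d\le1,3,4$ are routine; the rigidity argument in the case $d=2$ is the crux.
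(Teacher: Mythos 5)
Your proof is correct, and while the transversality half is essentially the paper's verification in different clothing, the non-bilinearity half takes a genuinely different route. For transversality, you compute every vertical fiber, observe each is a subspace, and invoke the $(x,y)\mapsto(y,x)$ symmetry; the paper instead decomposes $P=P_0\cup P_1$ with $P_0=\{x_1y_1=x_2y_2=0\}$ and $P_1=\{x_1+x_2=y_1+y_2=0\}$ and checks closure of horizontal sums case by case --- same content, different bookkeeping. For non-bilinearity, both arguments start with the same reduction $W_1=W_2=\F_3^2$, but the paper then computes the annihilator of $P$ among bilinear forms directly: plugging the points $((0,1),(1,0))$, $((1,0),(0,1))$ and $((1,2),(1,2))$ into a general $2\times 2$ matrix shows that the only forms vanishing on $P$ are the multiples of $\mathrm{diag}(1,-1)$, and since any defining family $\mathcal{C}$ must lie in this annihilator, the single witness $((1,1),(1,1))$ (which satisfies $x_1y_1-x_2y_2=0$ but is not in $P$) finishes the proof. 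You never compute the annihilator; instead you exclude each possible dimension $d=\dim\mathcal{C}$ by matching fiber dimensions through the identity $\dim Z(\mathcal{C})_x=2-d+\dim(\mathcal{C}\cap R_x)$, with the rigidity analysis at $d=2$ carrying the load. In effect, your case $d\le 1$ is the paper's witness point in fiber language (the fiber of $Z(\mathcal{C})$ over $(1,1)$ is too large), while your cases $d=2,3,4$ are extra work that the inclusion $\mathcal{C}\subseteq\Ann(P)$ renders unnecessary; what your route buys is that it reuses the fiber computation from the first half and anticipates the fiber-theoretic viewpoint of Lemma \ref{rigidityPx} and Proposition \ref{projectif}, whereas the paper's argument is shorter and needs only three evaluation points plus one witness.
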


Nevertheless, we show that transversity together with an extra 
largeness hypothesis implies bilinearity for small characteristics. For any transverse set $P\subset \F_p^n\times \F_p^n$, let
$P_{x\cdot}=\{y\in \F_p^n:(x,y)\in P\}$ be the vertical \textit{fiber} above $x\in\F_p^n$. Notice that a non-empty fiber is a subspace.

\begin{theorem}
\label{positive}
Let $P\subset \F_p^n\times \F_p^n$ be a transverse set such that $P_{x\cdot}$ contains a hyperplane for any $x$. Then it is bilinear provided that the prime $p=2$ or $3$.
\end{theorem}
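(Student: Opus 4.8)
The plan is to bring $P$ to a normal form in which it is described by a ``direction field'' $a$ on $\F_p^n$, to translate transversality into an additivity‑type constraint on $a$, and then to show this constraint forces $a$ to be essentially constant or essentially linear; in both cases $P$ is visibly bilinear. The characteristic hypothesis will be used only at the end, for the projective line. \textbf{Normal form.} Each vertical fibre $P_{x\cdot}$ is a subspace (it is closed under vertical sums) containing a hyperplane, hence a hyperplane or all of $\F_p^n$; in particular it is non‑empty, so $(x,0)\in P$ for all $x$. If $P_{0\cdot}$ is a proper hyperplane $W_0$, then for $y\notin W_0$ the horizontal fibre $P_{\cdot y}$ is a subspace avoiding $0$, hence empty, so $P\subseteq\F_p^n\times W_0$ and every $P_{x\cdot}$ — a hyperplane inside $W_0$ — equals $W_0$; thus $P=\F_p^n\times W_0$, which is bilinear. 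Otherwise $P_{0\cdot}=\F_p^n$. Now $Z:=\{x:P_{x\cdot}=\F_p^n\}$ is a subspace, as $x,x'\in Z$ gives $(x+x',y)=(x,y)\stackrel{H}{+}(x',y)\in P$ for all $y$; moreover $P$ is invariant under adding an element of $Z$ to the first coordinate, so $P=(\pi\times\mathrm{id})^{-1}(\bar P)$ for a set $\bar P\subseteq(\F_p^n/Z)\times\F_p^n$ satisfying the same hypotheses, and bilinearity of $\bar P$ pulls back to bilinearity of $P$. So we may assume $P_{0\cdot}=\F_p^n$ while $P_{x\cdot}=\ker a(x)$ is a genuine hyperplane for each $x\neq0$, where $a(x)\in(\F_p^n)^\ast\setminus0$ is well defined up to a non‑zero scalar; put $a(0)=0$.

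\textbf{Transversality as a span condition.} For $y\neq0$ one has $P_{\cdot y}=\{x:a(x)(y)=0\}=a^{-1}\bigl(\{\xi:\xi(y)=0\}\bigr)$, and as $y$ varies this runs over the $a$‑preimages of all hyperplanes of $(\F_p^n)^\ast$; hence transversality of the horizontal fibres is equivalent to ``$a^{-1}(U)$ is a subspace for every subspace $U\subseteq(\F_p^n)^\ast$'', and this is in turn equivalent to the span condition $\langle a(x+x')\rangle\subseteq\langle a(x),a(x')\rangle$ for all $x,x'$. In particular each level set $V_\xi:=a^{-1}(\langle\xi\rangle)$ is a subspace, and the non‑zero vectors of the $V_\xi$ partition $\F_p^n\setminus0$. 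The key dichotomy is: \emph{either $a$ is constant on $\F_p^n\setminus0$, or $\dim V_\xi\le1$ for every $\xi$.} Suppose not: some $V_{\xi_0}$ has dimension $\ge2$ and $V_{\xi_0}\neq\F_p^n$. Fix $x_0\notin V_{\xi_0}$, so $\langle a(x_0),\xi_0\rangle$ is $2$‑dimensional, and define $f\colon V_{\xi_0}\to\mathbb{P}(\langle a(x_0),\xi_0\rangle)$ by $f(v)=\langle a(x_0+v)\rangle$ (which lands in a line by the span condition). If $f(v)=f(v')$ with $v\neq v'$, then applying the span condition to $(x_0+v)-(x_0+v')=v-v'\in V_{\xi_0}\setminus0$ gives $\langle\xi_0\rangle=\langle a(v-v')\rangle\subseteq f(v)$, so $f(v)=\langle\xi_0\rangle$. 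Thus $f$ is injective off $f^{-1}(\langle\xi_0\rangle)$, so at most $p$ vectors $v$ satisfy $f(v)\neq\langle\xi_0\rangle$, whereas $|V_{\xi_0}|\ge p^2>p$; hence $f(v)=\langle\xi_0\rangle$, i.e. $x_0+v\in V_{\xi_0}$, for some $v\in V_{\xi_0}$, forcing $x_0\in V_{\xi_0}$ — a contradiction.

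\textbf{Conclusion.} If $a$ is constant, say $\langle a(x)\rangle=\langle\xi\rangle$ for all $x\neq0$, then $P=(\{0\}\times\F_p^n)\cup(\F_p^n\times\ker\xi)=\bigcap_{i=1}^{n}\{(x,y):\lambda_i(x)\,\xi(y)=0\}$ for any basis $\lambda_1,\dots,\lambda_n$ of $(\F_p^n)^\ast$, hence is bilinear. If instead $\dim V_\xi\le1$ for all $\xi$, then $a$ descends to an injective map $\mathbb{P}(\F_p^n)\to\mathbb{P}((\F_p^n)^\ast)$ which, by the span condition, carries each line onto a line; the fundamental theorem of projective geometry then presents it as the map induced by a semilinear, hence (as $p$ is prime) linear, isomorphism onto its image, i.e. $\langle a(x)\rangle=\langle\ell(x)\rangle$ for some linear $\ell\colon\F_p^n\to(\F_p^n)^\ast$, so $P=\{(x,y):\ell(x)(y)=0\}$ is the zero set of a single bilinear form. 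Undoing the reduction modulo $Z$ completes the proof. The restriction $p\in\{2,3\}$ is genuinely needed only when $\dim\F_p^n=2$: the map above is then an arbitrary bijection of $\mathbb{P}^1(\F_p)$, and it is linear precisely because $\mathrm{PGL}_2(\F_p)=\mathrm{Sym}(\mathbb{P}^1(\F_p))$ for $p\le3$ — for $p\ge5$ a non‑projective bijection of $\mathbb{P}^1(\F_p)$ already gives a transverse, all‑fibres‑hyperplanes set that is not bilinear.

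The step I expect to be the real obstacle is the dichotomy: arranging the normal form so that the level sets $V_\xi$ are bona fide subspaces, and then extracting the contradiction above; it is there, together with the last projective‑geometry step in dimension two, that the combinatorics of small characteristic (a $\ge2$‑dimensional $\F_p$‑space has more than $p$ vectors; $\mathbb{P}^1(\F_p)$ is rigid for $p\le3$) is really used. A minor nuisance, invisible over $\F_2$, is tracking the $\F_p^\times$‑ambiguity in $a(x)$ when $p=3$; since $\F_3^\times=\{\pm1\}$ and negation fixes every line, this only slightly complicates the verifications.
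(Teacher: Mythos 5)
Your proof is correct and follows essentially the same route as the paper's (which establishes the stronger Proposition \ref{projectif}): encode the fibers by a map $x\mapsto[a(x)]$ into the projective space of the dual, observe via transversality that it sends collinear points to collinear points, quotient by the subspace of full fibers, prove a constant-or-injective dichotomy, and conclude with the fundamental theorem of projective geometry for dimension at least $3$ together with the fact that every bijection of $\mathbb{P}^1(\F_p)$ is projective exactly when $p\in\{2,3\}$. The only genuine divergence is your proof of the dichotomy, which replaces the paper's synthetic projective-plane construction (Lemma \ref{collineation}) by a uniform counting argument on the level sets $V_{\xi}$; both arguments are valid.
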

\noindent We end the paper providing non constructive counterexamples.
\begin{theorem} \label{existence}
Let $p$ be a prime and $n$ a positive integer.
\begin{enumerate}
\item 
 For any prime $p\geq 5$ and dimension $n\geq 2$, there exists a transverse, non-bilinear set $P\subset \F_p^n\times \F_p^n$ for which 
$P_{x\cdot}$ contains a hyperplane for any $x$.
\item For all but finitely many primes $p$ and dimensions $n$,  we 
can find transverse, non-bilinear sets 
$P\subset \F_p^n\times \F_p^n$ where $P_{x\cdot}$ is a space of dimension 1 for any $x$.
\end{enumerate}
\end{theorem}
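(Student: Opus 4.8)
\emph{Plan.} Everything is phrased through fibres. By definition $P$ is transverse exactly when every vertical fibre $P_{x\cdot}$ and every horizontal fibre $P_{\cdot y}:=\{x:(x,y)\in P\}$ is a linear subspace of $\F_p^n$ (possibly empty). On the other side, if $P$ is bilinear and both of its coordinate projections are onto $\F_p^n$ — which holds for all the sets we build, since their fibres are never empty — then in the defining data one may take $W_1=W_2=\F_p^n$, so that $P=\{(x,y):x^{\top}C_1y=\dots=x^{\top}C_my=0\}$ for some $n\times n$ matrices $C_i$ and $P_{x\cdot}=\bigl(\Span\{C_i^{\top}x:1\le i\le m\}\bigr)^{\perp}$. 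The strategy is to produce transverse sets whose fibre map $x\mapsto P_{x\cdot}$ is too wild to have this shape: for part (i) via an explicit choice together with a rigidity statement in dimension $2$, for part (ii) via a counting argument using that the total number of bilinear subsets of $\F_p^n\times\F_p^n$ is at most $p^{O(n^{4})}$ — a product of Gaussian binomial coefficients counting the possibilities for $W_1$, $W_2$ and the $\F_p$-span of the forms.

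For part (i) I first treat $n=2$. To a permutation $\tau$ of $\mathbb{P}^1(\F_p)$ attach
\[
P_\tau:=\bigl(\{0\}\times\F_p^2\bigr)\ \cup\ \bigcup_{x\ne 0}\{x\}\times\tau(\langle x\rangle).
\]
Its vertical fibres are lines or $\F_p^2$; when $\tau$ is a bijection each horizontal fibre is a single line through the origin, so $P_\tau$ is transverse and every fibre contains a hyperplane. I then claim that, for a bijection $\tau$, $P_\tau$ is bilinear if and only if $\tau\in\mathrm{PGL}_2(\F_p)$. Writing $P_\tau=\{x^{\top}C_iy=0\}$ as above, the requirement that $\Span\{C_i^{\top}x\}$ be one-dimensional for every $x\ne 0$ forces, by a case analysis on ranks, that the span $\mathcal{S}$ of the $C_i^{\top}$ inside the $4$-dimensional space of $2\times2$ matrices cannot have dimension $\ge 3$ (it would have to contain every rank-$\le1$ matrix, hence everything), that $\dim\mathcal{S}=2$ makes the line $\Span\{C_i^{\top}x\}$ constant in $x$, and that $\dim\mathcal{S}=1$ with invertible generator $B$ gives $P_{x\cdot}=\langle Bx\rangle^{\perp}$, i.e.\ $\tau$ is the Möbius map of $B$ composed with orthogonal complementation; constancy of the line contradicts bijectivity of $\tau$ on a set of size $p+1\ge 6$. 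Since a non-identity Möbius transformation of $\mathbb{P}^1(\F_p)$ fixes at most two points whereas a transposition fixes $p-1\ge 4$ points, for every $p\ge 5$ the set $P_\tau$ with $\tau$ a transposition is transverse, has all fibres containing a hyperplane, and is \emph{not} bilinear. For $n\ge 3$, fix such a $P_0\subset\F_p^2\times\F_p^2$, write $\F_p^n=\F_p^2\oplus\F_p^{\,n-2}$, and set $P:=\{((a,b),(c,d)):(a,c)\in P_0\}$ with $b,d$ free. Its fibres are $(P_0)_{a\cdot}\times\F_p^{\,n-2}$ and $(P_0)_{\cdot c}\times\F_p^{\,n-2}$, so $P$ is transverse with every fibre of dimension $\ge n-1$; and since $P_0=\{(a,c):((a,0),(c,0))\in P\}$ is a coordinate slice of $P$, bilinearity of $P$ would force bilinearity of $P_0$ (restricting a bilinear set to a coordinate subspace in each factor gives a bilinear set), a contradiction.

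For part (ii) I run the counting argument. For $n\ge 2$ and any bijection $\lambda$ of $\mathbb{P}^{n-1}(\F_p)$ put
\[
P_\lambda:=\bigl(\{0\}\times\F_p^n\bigr)\ \cup\ \bigcup_{x\ne 0}\{x\}\times\lambda(\langle x\rangle);
\]
exactly as before $P_\lambda$ is transverse, $P_{0\cdot}=\F_p^n$, and $P_{x\cdot}$ is a line for every $x\ne 0$ — and this is optimal, since if all vertical fibres including the one over $0$ were lines, the horizontal-fibre condition would force them all equal, so $P=\F_p^n\times(\text{line})$ would be bilinear. Distinct $\lambda$ yield distinct $P_\lambda$, so these furnish $\bigl(\tfrac{p^n-1}{p-1}\bigr)!$ transverse sets of the stated type. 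Comparing with the at-most-$p^{O(n^{4})}$ bilinear subsets of $\F_p^n\times\F_p^n$, an elementary estimate ($M!\ge (M/3)^M$ with $M=\tfrac{p^n-1}{p-1}\ge p^{n-1}$) shows $\bigl(\tfrac{p^n-1}{p-1}\bigr)!>p^{O(n^{4})}$ for all but finitely many pairs $(p,n)$ — for fixed $n$ once $p$ is large, for fixed $p$ once $n$ is large — and for all such $(p,n)$ at least one $P_\lambda$ is not bilinear.

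The step I expect to cost the most is the rigidity lemma underpinning part (i): that a family $x\mapsto\Span\{C_i^{\top}x\}$ of lines in $\F_p^2$ which is one-dimensional for every $x\ne 0$ is either constant or equal to $\langle Bx\rangle$ for a fixed $B\in\mathrm{GL}_2(\F_p)$. Dispatching the degenerate subcases (some $C_i$ of rank $1$, or all $C_i^{\top}$ proportional) and, throughout both parts, keeping track of whether $W_1,W_2$ can be shrunk and of the exceptional role of the fibre over $0$, is where the bookkeeping lives.
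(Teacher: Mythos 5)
Your proposal is correct, and part (ii) is essentially the paper's own argument: the same construction $P_\sigma$ from a bijection of projective spaces (with the unavoidable full fibre over $0$, which the paper also has), the same observation that distinct bijections give distinct transverse sets, and the same comparison of $\bigl(\tfrac{p^n-1}{p-1}\bigr)!$ against a $p^{O(n^4)}$ bound on the number of bilinear sets. For part (i) your family in dimension $2$ coincides with the paper's (fibres are the lines of a non-projective bijection of $\mathbb{P}^1(\F_p)$), but your verification differs in two respects. First, you make the non-projective bijection explicit as a transposition, ruled out of $\mathrm{PGL}_2(\F_p)$ because it fixes $p-1\geq 4$ points while a nontrivial projective map fixes at most two; the paper instead compares the counts $(p+1)!$ versus $(p+1)p(p-1)$. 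Second, you prove non-bilinearity via the rigidity of the span $\mathcal{S}$ of the defining matrices (your case analysis on $\dim\mathcal{S}$ is sound: $\dim\mathcal{S}\geq 3$ forces $\mathcal{S}$ to contain all matrices annihilating each fixed $x$, hence everything; $\dim\mathcal{S}=2$ forces rank-one matrices with common image, hence a constant fibre; $\dim\mathcal{S}=1$ forces an invertible generator and a projective $\tau$), whereas the paper shows $\Ann(P)=\{0\}$ by a rank analysis of the induced linear map $f'$ — essentially the same trichotomy in different clothing. The genuinely different step is your passage to $n\geq 3$: instead of the paper's direct construction with a codimension-$2$ subspace $W\leq V_1$ and a fresh annihilator computation in dimension $n$, you take a cylinder over the $2$-dimensional example and invoke the (easy, and correct) fact that intersecting a bilinear set with a product of coordinate subspaces yields a bilinear set, so non-bilinearity descends to the slice. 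This buys a cleaner reduction with no new computation in dimension $n$; the paper's route produces the same shape of example (full fibres exactly over a codimension-$2$ subspace, all other fibres hyperplanes through a fixed codimension-$2$ subspace of $V_2$) but keeps the argument self-contained via $\Orth(\Ann(P))=V_1\times V_2\neq P$. As you anticipated, the only place needing real care is the $\dim\mathcal{S}=2$ case of your rigidity lemma, and the sketch you give does close it.
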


The paper is organized as follows. In Section \ref{galg} we study the explicit algebraic counterexample. In Section \ref{sec:positive} we provide a qualitative classification of transverse sets
$P$ for which $P_{x\cdot}$ contains a hyperplane;
 this entails a proof for Theorem \ref{positive} and the basis for the proof Theorem \ref{existence}, which can be finally found in Section \ref{thirdAlt}.

\section{Proof of proposition \ref{algebraic}}\label{galg}

Consider $P\subset \mathbb{F}_3^2\times\mathbb{F}_3^2$ to be the set defined by the system \eqref{def:Pintro}.
We want to show that we cannot have 
\[P=\{(x,y)\in W_1\times W_2: Q_1(x,y)=\cdots=Q_{r_3}(x,y)=0 \}\]
for any subspaces $W_1,W_2$ and any bilinear forms $Q_1,\cdots Q_{r_3}$
 so by contradiction suppose that it is the case.

The set $P$ is easy to describe: indeed, 
if $(x,y)\in P$, then either $x_1y_1=x_2y_2=0$
or $x_1y_1x_2y_2\neq 0$. Let 
\[P_0=\{(x_1,x_2,y_1,y_2)\in \mb{F}_3^2 \times \mb{F}_3^2: x_1y_1=0 \text{ and } x_2y_2=0\}\]
 and \[P_1=\{(x_1,x_2,y_1,y_2)\in \mb{F}_3^2 \times \mb{F}_3^2: x_1+x_2=0 \text{ and } y_1+y_2=0\}\] which is a subset of $P$ and contains the set of points where $x_1y_1x_2y_2\neq 0$  since $z^2\equiv 1\mod 3$ provided $z\not\equiv 0\mod 3$. Therefore $P=P_0 \cup P_1$.\\

Let us check that this set satisfies both conditions $P\stackrel{V}{+}P=P$ and $P\stackrel{H}{+}P=P$. By symmetry it is enough to check that $P\stackrel{H}{+}P=P$. The cases where the points $(x_1,x_2,y_1,y_2), (x'_1,x'_2,y_1,y_2)$ are both in $P_0$ or $P_1$ are easily verified and if one is in $P_0$ and the other in $P_1$ then $(x_1+x'_1)y_1^2+(x_2+x'_2)y_2^2=0$ by the first equation in \eqref{def:Pintro} and 
\[(x_1+x'_1)^2y_1+(x_2+x'_2)^2y_2=2(x_1x_1'y_1+x_2x_2'y_2)=0\]
 using the fact that either $(x_1,x_2,y_1,y_2)$ or $ (x'_1,x'_2,y_1,y_2)$ is in $P_0$.\\

The fact that $P_1 \subset P$ shows that $W_1$, $W_2$ are at least one dimensional but this is not enough. Indeed, suppose they are one dimensional, then $W_1$ and $W_2$ should be precisely $\{(x_1,x_2): x_1+x_2=0\}$ and $\{(y_1,y_2): y_1+y_2=0\}$ but, for example, $(1,0,0,0)\notin W_1\times \F_3^2$ and $(0,0,1,0)\notin \F_3^2\times W_2$ and they belong to $P$. As a consequence $W_1=W_2=\mathbb{F}_3^2$. Let us show that no bilinear form other than the trivial one can vanish on this $P$. Suppose
\[xQy=\left(\begin{array}{cc}
x_1&x_2\end{array}\right)
\left(\begin{array}{cc}
a_{11}&a_{12}\\
a_{21}&a_{22}
\end{array}\right)
\left(\begin{array}{c}
y_1\\
y_2\end{array}\right)=0\]
for all $(x,y)\in P$ or, alternatively,
\[xQy=a_{11}x_1y_1+a_{12}x_1y_2+a_{21}x_2y_1+a_{22}x_2y_2=0.\]
On $P_0\subset P$, this equation boils down to
\[a_{12}x_1y_2+a_{21}x_2y_1=0\]
but now $(0,1,1,0),(1,0,0,1)\in P_0$ imply $a_{12}=a_{21}=0$. On the other hand $(1,2,1,2)\in P_1$ imply $a_{11}+a_{22}=0$. This  implies that if $P$ is a bilinear set then it must be the zero set of $Q=\left(\begin{array}{cc}1&0\\0&-1\end{array}\right)$ (or equivalently, $-Q$). But this is impossible because $(x,y)=(1,1,1,1)\notin P$ and yet $xQy=0$. 
So the only option left is that $P=\mathbb{F}_3^2\times\mathbb{F}_3^2$ and this is not the case either. As an aside, note that $\dim P_{x\cdot}$ is not constant on 
$\F_p^2\setminus\{0\}$, so this example is different from the generic
ones mentioned in Theorem \ref{existence}.

\section{Proof of proposition \ref{positive}}
\label{sec:positive}
In this section, we 
prove Theorem \ref{positive}
Let $V_1$ and $V_2$ be two $\F_p$-vector spaces,
and we slightly generalise the above discussion to transverse sets of $V_1\times V_2$.
Let $P\subset V_1\times V_2$ be a set.
Write $P_{x\cdot}=\{y\in V_2 : (x,y)\in P\}$ and $P_{\cdot y}=\{x\in V_1: (x,y)\in P\}$ for the vertical and horizontal fibers, respectively, borrowing the notation from \cite{GM17}.
We now characterise transversity by some rigidity property of
the map $x\mapsto P_{x\cdot}$.

\begin{lemma}
\label{rigidityPx}
A set $P\subset V_1\times V_2$ is transverse if, and only if,
the map $x\mapsto P_{x\cdot}$ satisfies the following properties.
\begin{enumerate}
\item For any $x$, the set $P_{x\cdot}$ is the empty set or a subspace and $P_{x\cdot}\subset P_{0\cdot}$.
\item For any $x\neq 0$, the set $P_{x\cdot}$
depends only on the class $[x]\in P(V_1)=V_1^*/\F_p^*$
of $x$ in the projective space. 
\item If $[z]$ is on the projective line spanned by $[x]$ and $[y]$, we have $P_{z\cdot}\supset P_{x\cdot}\cap P_{y\cdot}$.
\end{enumerate}
\end{lemma}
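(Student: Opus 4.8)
The plan is to prove both directions by unpacking the definitions of $\stackrel{V}{+}$ and $\stackrel{H}{+}$ in terms of fibers. The key translation is that $P\stackrel{V}{+}P=P$ says exactly that every vertical fiber $P_{x\cdot}$ is closed under addition (hence empty or a subspace), while $P\stackrel{H}{+}P=P$ is a statement comparing fibers over \emph{different} points $x$: if $(x,y),(x',y)\in P$ then $(x+x',y)\in P$, i.e. $P_{x\cdot}\cap P_{x'\cdot}\subseteq P_{(x+x')\cdot}$.

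For the forward direction, assume $P$ is transverse. Property (i): from $P\stackrel{V}{+}P=P$ each $P_{x\cdot}$ is empty or a subspace; to get $P_{x\cdot}\subseteq P_{0\cdot}$ apply the horizontal condition with $x'=-x$ (using that $P_{(-x)\cdot}=P_{x\cdot}$, which itself follows since $P_{x\cdot}$ is a subspace) together with stability, or more directly note $y\in P_{x\cdot}$ and $y\in P_{(-x)\cdot}$ give $y\in P_{0\cdot}$. Property (ii): for $\lambda\in\F_p^*$ we must show $P_{x\cdot}=P_{(\lambda x)\cdot}$. This follows by iterating the horizontal sum: $P_{x\cdot}\cap P_{(k-1)x\cdot}\subseteq P_{kx\cdot}$ together with an induction shows $P_{x\cdot}\subseteq P_{kx\cdot}$ for all $k$, and replacing $x$ by $\lambda x$ and $k$ by $\lambda^{-1}\bmod p$ gives the reverse inclusion, hence equality; one should be slightly careful to handle the reduction mod $p$ cleanly (the orbit $\{kx : k\in\F_p^*\}$ is exactly the line through $x$ minus the origin). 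Property (iii): if $[z]$ lies on the projective line through $[x]$ and $[y]$, then a representative can be written $z=\alpha x+\beta y$; combining the additive translation along $x$ and along $y$ from the horizontal condition gives $P_{x\cdot}\cap P_{y\cdot}\subseteq P_{(\alpha x)\cdot}\cap P_{(\beta y)\cdot}\subseteq P_{(\alpha x+\beta y)\cdot}=P_{z\cdot}$, where the first inclusion uses (ii).

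For the converse, assume (i)--(iii). The vertical identity $P\stackrel{V}{+}P=P$ is immediate from (i): sums within a fiber stay in that fiber since it is a subspace, and $\stackrel{V}{+}$ cannot enlarge a fiber beyond itself. For the horizontal identity, take $(x,y),(x',y)\in P$; we want $(x+x',y)\in P$. If $x'=-\lambda x$ for some scalar, then $[x+x']$ is either $0$ or equal to $[x]$, and (i) resp. (ii) handle these cases; otherwise $[x+x']$ lies on the projective line through $[x]$ and $[x']$ and is distinct from both, so (iii) gives $P_{(x+x')\cdot}\supseteq P_{x\cdot}\cap P_{x'\cdot}\ni y$. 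The mildly annoying point, and the main thing to get right, is the bookkeeping around the origin and around scalar multiples: the projective-space statements in (ii)--(iii) only constrain fibers over nonzero points, so one has to treat $x=0$, $x'=0$, and $x+x'=0$ as separate (easy) cases using (i). Once that case analysis is laid out, both inclusions $P\stackrel{H}{+}P\subseteq P$ and $P\subseteq P\stackrel{H}{+}P$ (the latter trivial, taking $x'=0$) close the argument.
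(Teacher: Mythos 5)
Your proposal is correct and takes essentially the same route as the paper: both directions are proved by unpacking the two sum conditions fiberwise, with the forward direction obtained by propagating a point $y$ of a fiber along scalar multiples and sums of base points (the paper packages this same iteration as the statement that the horizontal fiber $P_{\cdot y}$ is a subspace containing $0$ and $\lambda x$), and the converse by the easy case analysis around the origin and collinear points, which you actually spell out in more detail than the paper does. One small slip worth fixing: the parenthetical claim that $P_{(-x)\cdot}=P_{x\cdot}$ ``follows since $P_{x\cdot}$ is a subspace'' is a non sequitur --- $P_{x\cdot}$ being a subspace of $V_2$ says nothing about the fiber over $-x$ --- but it is harmless, since your iteration argument for (ii) (take $k=p-1$ and $k=p$) independently yields both that identity and the containment $P_{x\cdot}\subseteq P_{0\cdot}$.
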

\begin{proof}
Let $P\subset V_1\times V_2$ be transverse.
Let $x\in V_1$. Because $P \stackrel{V}{+} P$, we find that $P_{x\cdot}+P_{x\cdot}=P_{x\cdot}$, so $P_{x\cdot}$ is empty or a subspace. Similarly $P_{\cdot y}$ is empty or a subspace. Let $y\in P_{x\cdot}$.
Then $x\in P_{\cdot y}$ which implies $0\in P_{\cdot y}$, 
hence $y\in P_{0\cdot}$, which proves the first point. Further, 
$(\lambda x,y)\in P$ for any $\lambda \neq 0$ as well, thus $y\in P_{\lambda x \cdot}$; this shows the second point.
To prove the third point, suppose 
without loss of generality that $z=x+\lambda y$ for some $\lambda \in \F_p$. Let $w\in  P_{x\cdot}\cap P_{y\cdot}$.
Thus both $x$ and $y$ belong to the subspace $P_{\cdot w}$, so that
$z\in P_{\cdot w}$ too, which means that $w\in P_{z\cdot}$, concluding the proof.

We now prove the converse. Let a set $P\subset V_1\times V_2$ satisfy the three properties.
The first point means that $P \stackrel{V}{+} P=P$.
The horizontal stability follows from the second and third points.
\end{proof}

We will need another lemma.
Recall the notation $\mathbb{P}(V)=V^*/\F^*$ for the projective space of an $\F$-vector space $V$.
We will often omit the distinction between $x\in V$ and
its class $[x]\in \mathbb{P}(V)$. It will be convenient to use the language of projective geometry, of which we assume some basic facts, such as
the fact that any two (projective) lines of a (projective) plane intersect.
\begin{lemma}
\label{collineation}
Suppose that $\xi : \mathbb{P}(V_1)\rightarrow \mathbb{P}(V_2)$ has the property 
that for any $x,y,z$ in $V_1$ such that $z\in\Span(x,y)$, we have
$\xi(z)\in\Span(\xi(x),\xi(y))$. Then $\xi$ is either constant
or injective.
\end{lemma}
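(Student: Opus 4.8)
The plan is to prove the equivalent statement that if $\xi$ is not injective then it is constant. So suppose $\xi(a)=\xi(b)$ for two distinct points $a\neq b$ of $\mathbb{P}(V_1)$, and call this common value $w_0$. The first step is the remark that every non-empty fibre $\xi^{-1}(w)$ is a projective subspace of $\mathbb{P}(V_1)$: if $x\neq y$ both lie in it, then $\xi(x)$ and $\xi(y)$ are the same point of $\mathbb{P}(V_2)$, so $\Span(\xi(x),\xi(y))$ is just the single point $w$, and hence the hypothesis forces $\xi(z)=w$ for every $z$ on the line through $x$ and $y$. Thus each fibre contains the line joining any two of its points, which is exactly the defining property of a projective subspace. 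In particular $S:=\xi^{-1}(w_0)$ is a projective subspace containing the whole line $\ell_0$ through $a$ and $b$.

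I then argue by contradiction: assume $S\neq\mathbb{P}(V_1)$ and choose $c\notin S$, so that $\xi(c)=w_1\neq w_0$. Then $a,b,c$ are linearly independent and $\pi:=\mathbb{P}(\Span(a,b,c))$ is a projective plane containing $\ell_0$ and $c$. The claim is that $\xi(\pi)$ lies inside the single projective line $L:=\mathbb{P}(\Span(w_0,w_1))$ of $\mathbb{P}(V_2)$: given $d\in\pi$, the line through $c$ and $d$ meets $\ell_0$ at some point $e$ (any two lines of a projective plane intersect), and since $d$ then lies on the line through $c$ and $e$, the hypothesis yields $\xi(d)\in\Span(\xi(c),\xi(e))=\Span(w_1,w_0)$. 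Hence $\xi$ restricts to a weakly collinearity-preserving map $\pi\to L$ from a projective plane into a projective line.

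The crux is the sub-claim that any such map $\psi\colon\pi\to L$ is constant. By the first step its fibres are again projective subspaces of the projective plane $\pi$, hence each non-empty one is a point, a line, or all of $\pi$; and because two distinct lines of $\pi$ always meet whereas distinct fibres are disjoint, at most one fibre can be a line. If no fibre equals $\pi$, then the fibres partition the $p^2+p+1$ points of $\pi$ into at most one line together with singletons, so $\psi$ takes at least $p^2+1$ distinct values; this is impossible since $L$ has only $p+1$ points and $p^2+1>p+1$ for every prime $p$. Therefore some fibre is all of $\pi$, i.e.\ $\psi$ is constant. Applied to $\xi|_\pi$, this forces $\xi(a)=\xi(c)$, contradicting $w_0\neq w_1$. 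Hence $S=\mathbb{P}(V_1)$ and $\xi$ is constant.

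I do not foresee a genuine obstacle, but two points merit attention. The first is the careful handling of the degenerate instances of the hypothesis --- when $x=y$, or when $x,y$ are linearly dependent --- since it is precisely the collapse of $\Span(\xi(x),\xi(y))$ to a single point in those cases that promotes the fibres from mere unions of lines to honest projective subspaces. The second, and the only substantive ingredient, is the plane-to-line sub-claim: its content is that a weakly collinearity-preserving map cannot squeeze an entire projective plane into a projective line without being constant, and the proof rests on the crude cardinality count $p^2+p+1>p+1$, which holds already at $p=2$, so no special treatment of small primes is needed.
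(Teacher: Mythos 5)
Your proof is correct, and while it shares the paper's skeleton, the decisive step is genuinely different. Both arguments start from the same two facts: non-injectivity of $\xi$ on a line forces constancy there (your observation that every fibre is a projective subspace is a slightly stronger packaging of this), and any witness of simultaneous non-injectivity and non-constancy can be placed inside a projective plane spanned by points $a,b,c$ with $\xi(a)=\xi(b)\neq\xi(c)$. From there the paper argues synthetically: it picks $w$ on the line $(bc)$, uses that $\xi$ restricted to each of the lines $(bc)$ and $(ac)$ is a bijection onto the same image line to find $w'$ on $(ac)$ with $\xi(w)=\xi(w')$, and then the intersection point $u=(ww')\cap(ab)$ produces a line $(ww')$ on which $\xi$ is neither constant nor injective, contradicting the line dichotomy. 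You instead show that the whole plane $\pi$ maps into the single line $L=\mathbb{P}(\Span(w_0,w_1))$ and then count fibres: being pairwise disjoint projective subspaces of $\pi$, at most one fibre can be a line, so if none equals $\pi$ the map takes at least $p^2+1$ values, exceeding the $p+1$ points of $L$. Your route is more elementary (no auxiliary configuration or figure) at the cost of invoking finiteness of $\F_p$ overtly, whereas the paper's construction is the classical incidence-geometry argument that localizes the contradiction to a single line (though its ``bijection on lines'' step also quietly uses finiteness). The only point needing a word in a final write-up is the degenerate case $d=c$ (and $d\in\ell_0$) in the verification that $\xi(\pi)\subset L$, which is immediate.
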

\begin{proof}
First we deal with the case where $\mathbb{P}(V_1)$ is a projective line (i.e. $\dim V_1=2$). Suppose $\xi$ is not injective, thus there exists two 
non-collinear vectors $x$ and $y$ of $V_1$ such that $\xi(x)=\xi(y)$.
Now $(x,y)$ is a basis of $V_1$, so for any $z\in \mathbb{P}(V_1)$, by the defining property of $\xi$,
we have $\xi(z)=\xi(x)=\xi(y)$. So $\xi$ is constant.

Now suppose $\dim V_1\geq 3$. We already know that $\xi$ is
either injective or constant on any projective line.
Assume that overall $\xi$  is neither injective nor constant. This means that there exist two distinct points $ x,y$ such that $ \xi(x)=\xi(y)$, and a third point $ z$ satisfying $ \xi(z)\neq \xi(x)$. This implies that $ x,y,z$ are not (projectively) aligned, so they span a projective plane. 
The reader may now wish to follow the proof on Figure \ref{fig:proj}.
Take a point $ w\notin \{y,z\}$ on the line $(yz)$ spanned by $y$ and $z$. Because $ \xi$ is a bijection on both lines $ (yz)$  and $(xz)$,
and the image of both lines under $\xi$ being the same namely $(\xi(y)\xi(z))$, we can find $ w'\notin  \{x,z\}$ on $ (xz)$ such that $ \xi(w)=\xi(w')\neq \xi(x)$. Now consider the intersection $ u=(ww')\cap (xy)$ in the projective plane $\Span(x,y,z)$. Then  we have $ \xi(u)=\xi(x)=\xi(y)\neq \xi(w)$, so that on the line $ (ww')$ the map $ \xi$ is neither constant nor injective, a contradiction.
\begin{figure}
\psset{xunit=1.0cm,yunit=1.0cm,algebraic=true,dimen=middle,dotstyle=o,dotsize=3pt 0,linewidth=0.8pt,arrowsize=3pt 2,arrowinset=0.25}
\psscalebox{1.3}{
\begin{pspicture*}(-6.02,-0.64)(3.88,5.78)
\psplot{-4.42}{4.88}{(-15.35-4.9*x)/-3.46}
\psplot{-4.42}{4.88}{(--4.11--1.62*x)/6.5}
\psplot{-4.42}{4.88}{(-15--3.28*x)/-3.04}
\psplot{-4.42}{4.88}{(--4.4-0.17*x)/3.93}
\begin{scriptsize}
\psdots[dotstyle=*](0.2,4.72)
\rput[bl](0.11,4.9){$x$}
\psdots[dotstyle=*](-3.26,-0.18)
\rput[bl](-3.38,-0.05){$z$}
\psdots[dotstyle=*](3.24,1.44)
\rput[bl](3.2,1.55){$y$}
\psdots[dotstyle=*](-2.27,1.22)
\rput[bl](-2.5,1.36){$w'$}
\psdots[dotstyle=*](1.66,1.05)
\rput[bl](1.54,1.17){$w$}
\psdots[dotstyle=*](3.69,0.96)
\rput[bl](3.48,0.65){$u$}
\end{scriptsize}
\end{pspicture*}}

\caption{Proof of Lemma \ref{collineation}.}
\label{fig:proj}
\end{figure}
\end{proof}

Finally, we recall the fundamental
 theorem \cite[Th\'eor\`eme 7]{Samuel} of projective geometry.
\begin{theorem}
\label{fundamental}
Suppose that $\xi : \mathbb{P}(V_1)\rightarrow \mathbb{P}(V_2)$ is injective and has the property 
that for any $x,y,z$ in $V_1$ such that $z\in\Span(x,y)$, we have
$\xi(z)\in\Span(\xi(x),\xi(y))$ (i.e. it maps points on a line to points on a line).
Further, suppose that $\dim V_1\geq 3$.
Then $\xi$ is a projective map,
that is, there exists a linear injection $f : V_1\rightarrow V_2$
such that $\xi([x])=[f(x)]$ for any $x\in V_1$.
\end{theorem}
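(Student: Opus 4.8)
The plan is to reduce the statement to the classical fact that, over the prime field $\F_p$, a collineation of a projective space of dimension at least two which fixes a projective frame must be the identity; the engine behind that fact is the von Staudt reconstruction of the field operations by incidence together with the triviality of $\aut(\F_p)$. To set this up, first note that an $\F_p$-projective line has exactly $p+1$ points, so an injective map taking a projective line into a projective line must take it \emph{onto} a line. Feeding this into a routine induction on dimension, $\xi$ takes projectively independent tuples to projectively independent tuples and maps each projective subspace bijectively onto a projective subspace of the same dimension (surjectivity onto the target subspace comes from pulling points back through the line bijections and using that two coplanar lines always meet). Hence, fixing a basis $e_0,\dots,e_n$ of $V_1$ (so $n=\dim V_1-1\geq 2$) and a representative $\bar e_i\in V_2$ of $\xi([e_i])$, the vectors $\bar e_0,\dots,\bar e_n$ are linearly independent and $\xi$ is a bijection of $\mathbb{P}(V_1)$ onto $\mathbb{P}(W)$, where $W=\Span(\bar e_0,\dots,\bar e_n)$.

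Next I would normalise. Put $e=e_0+\dots+e_n$, so that $[e_0],\dots,[e_n],[e]$ is a projective frame of $\mathbb{P}(V_1)$ and its image is a projective frame of $\mathbb{P}(W)$; since $[e]$ lies on no coordinate hyperplane, neither does $\xi([e])$, and so after rescaling each $\bar e_i$ we may assume $\xi([e])=[\bar e_0+\dots+\bar e_n]$. Let $f\colon V_1\to V_2$ be the injective linear map with $f(e_i)=\bar e_i$. Then $\eta:=[f]^{-1}\circ\xi$ is a collineation of $\mathbb{P}(V_1)$ fixing the frame $[e_0],\dots,[e_n],[e]$, and it suffices to prove $\eta=\id$, for then $\xi([x])=[f(x)]$ for every $x\in V_1$.

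To show $\eta=\id$ I would coordinatise $\mathbb{P}(V_1)$ by the frame: on the line through $[e_i]$ and $[e_j]$ the points other than $[e_j]$ are labelled by $\F_p$ with $[e_i]\mapsto 0$ and $[e_i+e_j]\mapsto 1$, and the sum and product of $\F_p$ are recovered purely by incidence constructions performed inside a projective plane of $\mathbb{P}(V_1)$ (available because $\dim V_1\geq 3$), using the frame and auxiliary points off the line. As $\eta$ preserves incidence and fixes the frame, it induces a field automorphism of $\F_p$ on every coordinate line — one and the same automorphism on all homogeneous coordinates at once — and, $\aut(\F_p)$ being trivial, this automorphism is the identity. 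Therefore $\eta$ fixes every point with homogeneous coordinates in $\F_p$, i.e.\ every point of $\mathbb{P}(V_1)$, whence $\eta=\id$ and $\xi=[f]$.

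The step I expect to be the main obstacle is the last one: making the von Staudt coordinatisation precise, and in particular checking that addition and multiplication of $\F_p$ really are collineation-invariant incidence constructions and that a frame-fixing collineation acts through a \emph{single} field automorphism on all coordinates simultaneously. The first two steps are essentially bookkeeping, resting only on the finiteness of $\F_p$-projective lines and elementary incidence geometry; this is also the juncture at which the hypothesis $\dim V_1\geq 3$ — which rules out the genuinely exceptional one-dimensional case, cf.\ Lemma~\ref{collineation} — is really used.
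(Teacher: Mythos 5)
The paper does not actually prove Theorem \ref{fundamental}: it is quoted as the fundamental theorem of projective geometry with a citation to Samuel \cite[Th\'eor\`eme 7]{Samuel}, and the only original content nearby is the remark that over the prime field $\F_p$ no Frobenius-type automorphisms arise (plus the separate counting remark handling $\dim V_1'=2$, $p\in\{2,3\}$, which is outside the scope of the statement you were given). So there is no in-paper argument to diverge from; what you have written is, in effect, a proof of the cited classical result. Your route is the standard one behind that citation: reduce to a collineation fixing a projective frame and invoke von Staudt's incidence reconstruction of the field operations, so that the frame-fixing collineation acts through a field automorphism, which is trivial since $\aut(\F_p)=\{\id\}$ --- exactly the point the paper's remark about non-prime fields alludes to. Two features of your write-up are worth highlighting. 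First, the opening observation that over a finite field an injective map sending lines \emph{into} lines must send them \emph{onto} lines (both have $p+1$ points) is precisely what is needed to apply the classical theorem in the form stated here, where $\xi$ need not be surjective and $\dim V_2$ may exceed $\dim V_1$; together with your induction showing that spans of independent points go bijectively to spans of independent images, this correctly reduces the statement to the equal-dimensional, bijective case. Second, the von Staudt step, which you candidly flag as the main remaining obstacle, is left as an appeal to classical material rather than carried out; that is the same division of labour the paper makes by citing Samuel, and it is acceptable here, though note that over $\F_p$ you could even sidestep the ``single automorphism on all coordinates'' worry: once each coordinate line is fixed pointwise (each per-line automorphism being trivial), an elementary incidence argument (intersecting lines through frame points) already forces $\eta=\id$.
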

\noindent Here we require 
the field $\F_p$ to be a prime field; on a non prime finite field $\F_q$, we would need to incorporate Frobenius field automorphisms.

Note that the result holds even if $\dim V_1=2$ in the case where 
$p=2$ or $3$. Indeed, the number of bijections between two projective lines is $(p+1)!$. On the other hand, since there are $(p^2-1)(p^2-p)$
linear bijections between any two given planes, the number of projective bijections is $(p^2-1)(p^2-p)/(p-1)=(p+1)p(p-1)$. These two numbers are equal when $p\in \{2,3\}$ which forces any bijection to be projective.

Now we state this section's main result.

\begin{proposition}
\label{projectif}
Let $P\subset V_1\times V_2$ be a transverse set.
Suppose that $\codim_{V_2} P_{x\cdot}\leq 1$ for any $x\in V_1$.
Then one of the three alternatives holds.
\begin{enumerate}
\item There exist a subset $W\subset V_1$ which is empty or a subspace, and a hyperplane $H\leq V_2$, such that
$P=W\times V_2\cup V_1\times H$.
\item There exists a bilinear form $b$ on $V_1\times V_2$ such that
$P=\{(x,y)\in V_1\times V_2: b(x,y)=0\}$.
\item We have $p\geq 5$ and the minimal codimension of a subspace $W\leq V_1$ such that
$W\times V_2\subset P$ is exactly 2.
\end{enumerate}
\end{proposition}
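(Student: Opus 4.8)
The plan is to describe a transverse set $P$ with $\codim_{V_2}P_{x\cdot}\le 1$ — hence, necessarily, with all fibres nonempty — by a single map to a projective space and then read off its shape from Lemmas \ref{rigidityPx} and \ref{collineation} and Theorem \ref{fundamental}. Concretely, a fibre $P_{x\cdot}$ that is not all of $V_2$ is a hyperplane, hence the kernel of a line $[\ell_x]\in\mathbb{P}(V_2^*)$, and the map $[x]\mapsto[\ell_x]$ should turn out to be, up to a harmless quotient, a collineation; the three alternatives then correspond to this map being constant, injective in dimension $\ge 3$, or injective on a projective line.

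First I would set $W=\{x\in V_1:P_{x\cdot}=V_2\}$. If $P_{0\cdot}\ne V_2$ then $P_{0\cdot}$ is a hyperplane $H$, and since every fibre is a subspace of $P_{0\cdot}$ of codimension $\le 1$ in $V_2$, every fibre equals $H$; thus $P=V_1\times H$ and alternative (1) holds with $W=\emptyset$. So assume $P_{0\cdot}=V_2$. Then $0\in W$, and the third property of Lemma \ref{rigidityPx} shows $W$ is a subspace — indeed the largest subspace $W'$ with $W'\times V_2\subset P$, since $W'\times V_2\subset P$ is equivalent to $W'\subset W$ (so in alternative (3) the claimed minimal codimension will just be $\codim W$). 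For $x\notin W$ write $P_{x\cdot}=\ker\ell_x$ with $0\ne\ell_x\in V_2^*$ and set $\xi([x])=[\ell_x]$.

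The crux is a rigidity statement: $\xi([x])$ depends only on the image of $x$ in $\mathbb{P}(V_1/W)$. To prove it, fix $x\notin W$ and $0\ne w\in W$ and look at the projective line $L$ through $[x]$ and $[w]$. Applying the third property of Lemma \ref{rigidityPx} to the spanning pair $([x],[w])$ and using $P_{w\cdot}=V_2$, every fibre over $L$ contains the hyperplane $P_{x\cdot}$, hence equals $P_{x\cdot}$ or $V_2$; but no point of $L$ other than $[w]$ lies in $W$ (else $x\in W$), so the fibre is exactly $P_{x\cdot}$ on $L\setminus\{[w]\}$. Thus $\xi$ is constant on $(\Span(x)+W)\setminus W$, which is the claim. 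Fixing a complement $V_1=W\oplus V_1'$, it is therefore enough to analyse $\xi':=\xi|_{\mathbb{P}(V_1')}$, and $\xi'$ preserves collinearity: if $z\in\Span(x,y)$ with $x,y,z\in V_1'\setminus\{0\}$, then $\ker\ell_z=P_{z\cdot}\supset P_{x\cdot}\cap P_{y\cdot}=\ker\ell_x\cap\ker\ell_y$ by Lemma \ref{rigidityPx}, which forces $\ell_z\in\Span(\ell_x,\ell_y)$.

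By Lemma \ref{collineation}, $\xi'$ is constant or injective. If it is constant with value $[\ell]$, then $P_{x\cdot}=\ker\ell=:H$ for all $x\notin W$ and $P=W\times V_2\cup V_1\times H$: alternative (1). If $\xi'$ is injective and $\dim V_1'\ge 3$, Theorem \ref{fundamental} gives a linear injection $f':V_1'\to V_2^*$ with $\xi'([x])=[f'(x)]$; extending $f'$ by zero on $W$ to $\tilde f:V_1\to V_2^*$ and setting $b(x,y)=(\tilde f(x))(y)$, the rigidity statement yields $P_{x\cdot}=\{y:b(x,y)=0\}$ for every $x$, i.e. alternative (2). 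If $\xi'$ is injective with $\dim V_1'\le 1$ it is again constant (or $W=V_1$ and $P=V_1\times V_2$), so we are back in (1)–(2). The delicate case, which I expect to be the main obstacle together with the rigidity step, is $\xi'$ injective with $\dim V_1'=2$: here $\mathbb{P}(V_1')$ is a line and Theorem \ref{fundamental} does not apply, but the collinearity property used inside $V_1'$ still forces the image of $\xi'$ onto a single projective line of $\mathbb{P}(V_2^*)$, so $\xi'$ is a bijection between two projective lines, which by the counting remark following Theorem \ref{fundamental} is automatically projective when $p\in\{2,3\}$ — giving a linear injection and alternative (2) — while for $p\ge 5$ it need not be, and in that case $W$ has codimension exactly $2$: alternative (3). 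What remains is the routine translation between hyperplanes of $V_2$ and points of $\mathbb{P}(V_2^*)$ and the check that the bilinear form or union produced on $V_1'$ extends correctly across $W$.
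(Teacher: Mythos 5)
Your proposal is correct and follows essentially the same route as the paper: reduce to $P_{0\cdot}=V_2$, encode the hyperplane fibres by a map to a projective space, show it factors through $V_1$ modulo $W=\{x:P_{x\cdot}=V_2\}$, and then split into the constant/injective cases via Lemma \ref{collineation}, Theorem \ref{fundamental} and the counting remark for $p\in\{2,3\}$ on a projective line, with the $p\geq 5$, $\codim W=2$ case giving the third alternative. The only (harmless) differences are cosmetic — you work with $V_2^*$ and a chosen complement of $W$ instead of a fixed nondegenerate pairing and the quotient $V_1/W$ — and you usefully make explicit two points the paper leaves implicit, namely that the image of $\xi'$ lies on a projective line when $\dim V_1'=2$ and that $W'\times V_2\subset P$ iff $W'\subset W$, which justifies the ``exactly $2$'' in alternative (iii).
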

\noindent Observe that this implies Theorem \ref{positive}, since
 the first two alternatives correspond to bilinear sets.
 This is obvious for the second one.
For the first one, if $W$ is empty, it is clear; otherwise, let $a_1, \ldots , a_k$ be linearly independent linear forms such that $W$ is the intersection of their kernels, and $\ell$ be a linear form that defines $H$. Then 
$$
P = \{(x,y)\in V_1\times V_2 : a_1(x)\ell(y) = \cdots = a_k(x)\ell(y) = 0\}.
$$
One can check that one can not write $P$
as in \eqref{eq:bilinar-bogol} with $W_1$ and $W_2$ other than 
$V_1$ and $V_2$ and with $r_3\neq k$, and $k$ may tend to infinity with $\dim V_1$, while the density is bounded below by $1/p$, but this is not
a contradiction with Theorem \ref{thm:bilinear-bogol}, since $P$ \emph{contains} (but may not be equal to) the Cartesian product $V_1\times H$. As for the last alternative,  Theorem \ref{existence} (ii) indicates that it is not necessarily a bilinear set.
\begin{proof}
Without loss of generality 
suppose that 
$P_{0\cdot}=V_2$. Indeed,  otherwise 
$P_{0\cdot}$ is a hyperplane $H$ and
Lemma \ref{rigidityPx} \textit{(i)} shows that
 $P=V_1\times H$. Let $(x,y)\mapsto x\cdot y$ be a bilinear form of full rank on $V_1\times V_2$. For $\phi \in V_2$
let $\phi^{\perp}=\{y\in V_2: x\cdot \phi=0\}$.
The hypothesis allows us to write $P_{x\cdot}=\xi(x)^{\perp}$ for some vector $\xi(x)\in V_2$ that is defined uniquely
up to homothety.
The proof consists in deriving rigidity properties for $\xi$ which will eventually make
it linear or constant.

With this new notation, the assumption just made implies that $\xi(0)=0$. Further, the second point
of Lemma \ref{rigidityPx} means that $\xi(x)$ depends only
on $[x]$ for $x\neq 0$ and the third point of that lemma  yields
that
whenever $[z]$ is on the projective line spanned by $x$ and $y$,
we have $\xi(z)\in\Span(\xi(x),\xi(y))$.
Using Lemma \ref{rigidityPx} $(iii)$, one can see that  the set
\[W:=\{x\in V_1: P_{x\cdot}= V_2\}\]
 is
a vector subspace.
If $W=V_1$, we have $P=V_1\times V_2$ so the first alternative holds. Otherwise $W\neq V_1$.
Let $V'_1=V_1/W$ and observe that for any given $ x-y=w\in W$, we have 
$ \xi(x)\in\Span(\xi(y),\xi(w))=\Span(\xi(y))$, that is, $\xi(x)=\xi(y)$ up to homothety, so that $\xi$ descends to a map $\xi': \mathbb{P}(V_1/W)\rightarrow \mathbb{P}(V_2)$. Thus $\xi'$ is a map $\mathbb{P}(V'_1)\rightarrow \mathbb{P}(V_2)$ that maps aligned points
to aligned points. If $\codim W=1$, it follows that $[\xi(x)]$
is a nonzero constant vector $\phi$ for $x\in V\setminus W$ so the first alternative is true with $H=\phi^{\perp}$. In the following we assume that $\codim W\geq 2$.

By construction $\xi'$ satisfies the hypothesis of Lemma  \ref{collineation}, therefore it should be either constant
or injective. If $\xi'$ is constant on $\mathbb{P}(V'_1)$, we can take $\xi(x)$ to be a nonzero constant vector $\phi\in V_2$ for all $x\in W^{\perp}$, while $\xi(x)=0$ on $W$. Let $H$ denote the subspace orthogonal to $\phi$. Then $P=W\times V_2\cup V_1\times H$,
which is the first alternative. We suppose now that $\xi'$ is injective. If $\dim V'_1=2$ and $p\geq 5$, the third alternative is true. Now suppose
that $\dim V'_1\geq 3$ or that $\dim V'_1=2$ and $p\in\{2,3\}$.
Theorem \ref{fundamental} and the remark following it 
 imply that $\xi$ comes from an injective linear map $V_1'\rightarrow V_2$, which we extend to a linear map 
$f:
V_1\rightarrow V_2$ with kernel $W$. In the particular case $p\in\{2,3\}$ this proves proposition \ref{positive}. 
Then $P$ is the zero set of the bilinear form $(x,y)\mapsto f(x)\cdot y$, which concludes the proof of Proposition \ref{projectif}.
\end{proof}

\section{Proof of proposition \ref{existence}}
\label{thirdAlt}
First we introduce a new notation and a characterisation of bilinear sets.
For a set $P\subset V_1\times V_2$ satisfying
$P_{0\cdot}=V_2$ and $P_{\cdot 0}=V_1$, let
$\Ann(P)$ be the subspace of the space $\mathcal{B}(V_1,V_2)$
of bilinear forms on $V_1\times V_2$ that consist of the forms
that vanish on $P$.
For a set $M\subset \mathcal{B}(V_1,V_2)$, let $\Orth(M)$
be the (bilinear) subset $V_1\times V_2$
where all the forms of $M$ vanish simultaneously.
Thus in general $P\subset \Orth(\Ann(P))$, while the equality
holds if and only if $P$ is a bilinear set.

Now we prove Theorem \ref{existence} (i), that is,
we show that some transverse sets satisfying
the third alternative of Proposition \ref{projectif} are not bilinear. 
Let $W$ be a subspace of codimension 2 in $V_1$.
Let $V'_1=V_1/W$ and $\xi' : \mathbb{P}(V'_1)\rightarrow \mathbb{P}(V_2)$ be a non-projective bijection onto a projective line;
as observed after Theorem \ref{fundamental}, this is possible when $p\geq 5$ since there are $(p+1)!$ bijection between any two projective lines but only 
$(p+1)p(p-1)$ projective maps between them.
Extend naturally $\xi'$ to a map $\xi : V_1\rightarrow V_2$ that induces $\xi'$ by projection
and let $P=\bigcup_{x\in V_1}\{x\}\times \xi (x)^\perp$.
Thanks to the characterization from Lemma \ref{rigidityPx}, we see that $P$ is transverse.

Let $b\in \Ann(P)$, one can write 
$b(x,y)= f(x)\cdot y$ where $f$ is a linear map $V_1\rightarrow V_2$ vanishing on $W$; thus it induces a linear map $f':V'_1\rightarrow V_2$ satisfying $f'(x)\in \Span(\xi'(x))$ for $x\in V'_1\setminus\{0\}$. Recall that $W$ has codimension two and therefore $f'$ has either rank $2$, $1$ or $0$ respectively. In the first case $f'$ does not vanish on $V'_1\setminus\{0\}$ and we get $\xi'(x)=[f'(x)]$ for any $x\neq 0$. As a consequence $\xi'$ is projective,
which is false. The second possibility can be ruled out too. Indeed, in this case the image of $f'$ is a line $\ell$, i.e. a vector space of dimension one.
As a consequence $\xi'([x])$ will have the same constant value for any $x\in V_1'\setminus\ker f'$ which contradicts the fact that it is injective by construction. The only possibility left is $f'=0$. This proves that $\Ann(P)=\{0\}$ and so $\Orth(\Ann(P))=V_1\times V_2\neq P$, which means that $P$ is not bilinear, concluding the proof of Theorem \ref{existence} (i).

We now show Theorem \ref{existence} (ii).
Here we think of $V_1$ and $V_2$ as two $n$-dimensional $\F_p$-vector spaces. 
Recall the characterisation of transverse sets obtained in Lemma \ref{rigidityPx}.
In particular, if $P_{x\cdot}\cap P_{y\cdot}=\{0\}$ for any $[x]\neq [y]$, the third property of that Lemma \ref{rigidityPx} is vacuous. As a consequence the characterization of transverse sets it provides is easier to satisfy.
One can achieve this, for instance, by taking a bijection
 $\sigma: \mathbb{P}(V_1)\rightarrow \mathbb{P}(V_2)$
 and letting
$P$ be the transverse set
\[P_\sigma=\{0\}\times V_2\cup \bigcup_{x\in \mathbb{P}(V_1)}\Span(x)\times\Span(\sigma(x))\]

\noindent where
span denotes the linear span in $V_1$ or $V_2$. 

With the assistance of a computer, it is possible to find $\sigma$ 
such that $P_\sigma\neq \Orth(\Ann(P_{\sigma}))$ for small $p$ and $n$. For instance, for $p=2$
and $n=3$ one can let $\sigma$ be
the permutation of
$\mathbb{P}(\F_2^3)=\F_2^3\setminus \{(0,0,0)\}$  defined in Figure 2.
The above characterization implies that $P_\sigma$ is not a bilinear set. Indeed, we find that

\begin{figure}
\label{sigma}
\begin{tabular}{|c|c|c|c|c|c|c|c|}
\hline 
$x$ & $(1,0,0)$ & $(0,1,0)$ & $(0,0,1)$ & $(1,1,0)$ & $(1,0,1)$ & $(0,1,1)$ & $(1,1,1)$ \\ 
\hline 
$\sigma(x)$ & $(1,0,0)$ & $(0,1,0)$ & $(0,0,1)$ & $(1,1,0)$ & $(0,1,1)$ & $(1,1,1)$ & $(1,0,1)$ \\ 
\hline 
\end{tabular} 
\caption{Table defining the permutation $\sigma$.}
\end{figure}

$$\Ann(P)=\left\lbrace 
\begin{pmatrix}
0 & 0 & 0 \\ 
0 & 0 & 0 \\ 
0 & 0 & 0
\end{pmatrix},
\begin{pmatrix}
0 & 0 & 0 \\ 
0 & 0 & 1 \\ 
1 & 0 & 0
\end{pmatrix},
\begin{pmatrix}
0 & 0 & 1 \\ 
0 & 0 & 1 \\ 
0 & 1 & 0
\end{pmatrix}  ,
\begin{pmatrix}
0 & 0 & 1 \\ 
0 & 0 & 0 \\ 
1 & 1 & 0
\end{pmatrix} 
\right\rbrace
$$
so that $\Orth(\Ann(P))$ contains $((1, 0, 0), (0, 1, 0))$,
an element which does not belong to $P$, so $P$
is not bilinear.

For general $p$ and $n$, the following non-constructive counting argument shows that
there exists a permutation $\sigma$ such that $P_\sigma$ is not bilinear.
On the one hand, the number of points in a projective space can be bounded from below, i.e.
\[\abs{\mathbb{P}(V_1)}=\frac{p^n-1}{p-1}\geq p^{n-1}.\]
Thus there are at least 
\[p^{n-1}!\geq \left(\frac{p^{n-1}}{e}\right)^{p^{n-1}}\]
transverse sets $P_\sigma$, where we used the inequality
$e^m\geq m^m/(m!)$ valid for any positive integer $m$.
On the other hand, the number of subspaces $M$ of 
$\mathcal{B}(V_1,V_2)$
can be bounded from above as follows. The space of bilinear forms $\mathcal{B}(V_1,V_2)$ has dimension $n^2$ and contains $p^{n^2}$ elements. The number of subspaces of dimension $k$ can be bounded by $p^{kn^2}$. Recall that there exists the same number of spaces of dimension $k$ and $n^2-k$ so the total number of subspaces can be bounded above by $\sum_{k=0}^{n^2} |\{H\subset \mathcal{B}(V_1,V_2): \dim(H)=k\}| \leq 2\frac{p^{n^4/2+n^2}-1}{p^{n^2}-1}$ (if $n$ is even this is clear and if it is odd the number of subspaces of dimension $(n^2+1)/2$ is only counted once and the bound obtained is smaller than the one given) . Now we argue by contradiction. The absence of counterexamples would force ($p,n\ge 2$)
\[\left(\frac{p^{n-1}}{e}\right)^{p^{n-1}}\leq p^{n-1}!\leq 2\frac{p^{n^4/2+n^2}-1}{p^{n^2}-1} \leq \frac{32}{15} p^{n^4/2}\]
which provides the contradiction we were seeking for $n\geq n_0(p)$. Indeed, we can take $n_0(p)=11$ for all $p$ but this estimate can be improved if we allow $p$ to be large enough and for instance $n_0(p)=2$ is enough for $p\ge 13$.

\section{Acknowledgments}

The first author thanks Mark Pankov
for a useful e-mail conversation that gave the idea for Theorem \ref{existence} (ii). The third named author is grateful to Carlos Pastor for his careful reading of an early version of Section \ref{galg}.

The second and third authors were partially supported by  MTM2014-56350-P  project of the MCINN (Spain). This material is based upon work supported by the National Science
Foundation under Grant No. DMS-1638352.

\end{document}